\newcommand{\xqedhere}[1]{%
    \rlap{%
         \hbox to#1{%
           \hfil
           \llap{%
               \ensuremath{\square}
           }%
       }%
   }%
}
\def\pasdegrille{\let\grille = \pasgrille}
\def\aat#1#2#3{
\divide \dimen1 by 48 \dimen3=\dimen1 \multiply \dimen1 by #1
\advance \dimen1 by -\dimen3 \divide \dimen1 by 101 \multiply
\dimen1 by 100 \divide \dimen2 by \count11 \multiply \dimen2 by #2
\setbox0=\hbox{#3}\ht0=0pt\dp0=0pt
  \rlap{\kern\dimen1 \vbox to0pt{\kern-\dimen2\box0\vss}}\dimen1= \wd1
\dimen2=\ht1}
\def\pasgrille{
\count12= \dimen1 \divide \count12 by 50 \divide \dimen2 by
\count12 \count11 =\dimen2 \ \divide \dimen1 by 48
\setlength{\unitlength}{\dimen1} \smash{\rlap{\ }} \dimen1= \wd1
\dimen2=\ht1 }
\def\grille{
\count12= \dimen1 \divide \count12 by 50 \divide \dimen2 by
\count12 \count11 =\dimen2 \ \divide \dimen1 by 48
\setlength{\unitlength}{\dimen1}
\smash{\rlap{\graphpaper[1](0,0)(50, \count11)}} \dimen1= \wd1
\dimen2=\ht1 }
\newcommand{\ud}{\,\mathrm{d}}
\newcommand{\be}{\begin{equation}}
\newcommand{\ee}{\end{equation}}
\theoremstyle{plain}
\newtheorem{thm}{Theorem}
\newtheorem{prop}{Proposition}[section]
\newtheorem{lem}[prop]{Lemma}
\theoremstyle{definition}
\newtheorem{rem}[prop]{Remark}
\numberwithin{equation}{section}
\def\squarebox#1{\hbox to #1{\hfill\vbox to #1{\vfill}}}
\title[Propagation of smallness and spectral estimates]{Propagation of smallness\\ and spectral estimates}
\author[N. Burq]{Nicolas Burq}
\address{Universit{\'e} Paris-Saclay, Math{\'e}matiques, UMR 8628 du CNRS, B{\^a}t 307, 91405  Orsay Cedex, France,   and Institut Universitaire de France}
\email{Nicolas.burq@universite-paris-saclay.fr}
\author[I. Moyano]{Ivan Moyano}
\address{Universit\'e de Nice Sophia-Antipolis
Parc Valrose, Laboratoire J.A. Dieudonn\'e,
UMR 7351 du CNRS
06108 NICE Cedex 02
FRANCE}
\email{Ivan.Moyano@unice.fr}
\def\11{{\rm 1~\hspace{-1.4ex}l} }
\def\R{\mathbb R}
\def\N{\mathbb N}
\begin{document}

\begin{abstract}
The purpose of this article is to show that the spectral projector estimates for Laplace operators can be deduced from Logunov-Malinnikova's Propagation of smallness estimates for harmonic functions~\cite{LoMa17, Lo18, Lo18-1}. The main point is to pass from the local estimates obtained in~\cite{BuMo19} (on a compact manifold) to {\em global} estimates. We also state classical consequences in terms of observability and control for heat equations, which are direct consequences of these spectral projector estimates..
\begin{center}
 
  \rule{0.6\textwidth}{.4pt} \end{center}\end{abstract}   

\ \vskip -1cm \noindent\hfil\rule{0.9\textwidth}{.4pt}\hfil \vskip 1cm 
 \maketitle   
 \section{Introduction} 
 In this article, we continue our investigation of Logunov and Malinnikova's results on the propagation of smallness for elliptic equations and their consequences on the propagation of smallness, observation and control for heat equations. Here our purpose is to extend our previous results~\cite{BuMo19}  on compact manifolds (with or without boundaries) to the non compact setting.
We consider on $\mathbb{R}^d$ the following Laplace operator
$$ \Delta = \frac 1 {\kappa(x)} \sum_{i,j} \partial_{x_i} g^{i,j}(x) \kappa (x) \partial_{x_j},$$
where we assume that the coefficients $\kappa, g$ are Lipschitz  and that $g$ is uniformly elliptic:
\begin{equation}\label{ellipticity}
 \| \kappa\|_{W^{1, \infty}(\mathbb{R}^d) } + \| g\|_{W^{1, \infty}(\mathbb{R}^d) } \leq A, \qquad \exists a>0;  \forall x \in \mathbb{R}^d, \quad a\text{Id} \leq g(x), \quad \kappa(x) \geq a
 \end{equation}
 Recall that the $n$-Hausdorff content (or measure) of a set $E\subset \mathbb{R}^d$ is 
\begin{equation}\label{hausdorf}
 \mathcal{C}_{\mathcal{H}}^n (E) = \inf \{ \sum_j r_j ^n; E \subset \cup_jB(x_j, r_j)\},
 \end{equation}
and (for a bounded set) the Hausdorff dimension of $E$ is defined as 
$$ \text{dim}_\mathcal{H} (E) = \inf \{n;   \mathcal{C}_{\mathcal{H}}^n(E) =0 \}.$$
Let $\omega \subset \mathbb{R}^d$ satisfying the following condition (see~\cite{BuJo-16,  LRMo16})
\begin{equation}\label{mesure}
\exists R, \delta >0; \quad \forall x \in \mathbb{R}^d, \,\, \text{meas} ( \omega \cap B(x, R)) \geq \delta,
\end{equation}
or 
\begin{equation}\label{mesurebis}
\exists R, \delta >0; \quad \forall x \in \mathbb{R}^d, \, \, \mathcal{C}_{\mathcal{H}}^n ( \omega \cap B(x, R)) \geq \delta.
\end{equation}
The first purpose of this note is to show that Jerison-Lebeau's spectral estimate from~\cite{LeMo19} still holds under these very weak assumptions (assuming in~\eqref{mesurebis} that $n<d$ is sufficiently close to $d$ i.e. the set $\omega$ is not too thin). Let us recall that the operator $-\Delta$ on $L^2( \mathbb{R}^d,  \kappa (x) dx)$, with domain $H^2(\mathbb{R}^d)$ is self adjoint with non negative (continuous) spectrum. As a consequence, using the spectral theorem, it is possible to write (here $dm_\lambda$ is the spectral measure of the operator $\sqrt{ - \Delta}$):
$$ u = \int_0^{+\infty} dm_\lambda u, \qquad \Pi_\mu (u) = 1_{\sqrt{ - \Delta} \leq \mu} u. 
$$ Moreover, if $\phi,\varphi$ are bounded functions on $\R$, we have 
$$   \phi(\sqrt{ - \Delta}) u = \int_0^{+\infty} \phi( \lambda)dm_\lambda u, \quad \bigl( \phi(\sqrt{- \Delta})u, \varphi(\sqrt{ -\Delta})u  \bigr)_{L^2( \mathbb{R}^d, \kappa (x) dx)} = \int_0^{+\infty} \phi( \lambda) \varphi(\lambda) (dm_\lambda u, u) 
$$ 
and consequently 
\begin{equation}\label{bound}
 \| \phi( \sqrt{ - \Delta}) \Pi_\mu u \| _{L^2( \kappa dx)} \leq \sup_{\lambda\in [0, \mu]} | \phi( \lambda)| \| u \| _{L^2(\kappa dx)}.
\end{equation}
\begin{thm}\label{princ}

Let 
$$ \Pi_\mu (u) = 1_{\sqrt{ - \Delta} \leq \mu} u = \int_{\lambda=0}^\mu dm_\lambda  u.
$$ 
Assume that $\omega$ satisfies the assumption~\eqref{mesure}. Then there exists $C>0$ such that for any $u \in L^2( \mathbb{R}^d)$,
\begin{equation}\label{spec1}
\| u \|_{L^2(\mathbb{R}^d; \kappa dx) } \leq C e^{C \mu}  \| u \|_{L^2(\omega; \kappa dx) }.
\end{equation}

Assume now that $\omega$ satisfies the assumption~\eqref{mesurebis} for some $n<d$ sufficiently close to $d$. Then there exists $C>0$ such that for anu $u \in L^2( \mathbb{R}^d)$,
\begin{equation}\label{spec2}
 \| u \|^2_{L^2(\mathbb{R}^d; \kappa dx) } \leq C e^{C \mu}  \sum_{k\in \mathbb{Z^d}} \| u \|^2_{L^\infty(\omega \cap B( k, R)) }.
 \end{equation}
\end{thm}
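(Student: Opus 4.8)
The plan is to reduce \thmref{princ} to the \emph{local} propagation of smallness of Logunov and Malinnikova through the classical elliptic lift in one extra variable, and then to glue the local information over a covering of $\RR^d$. It suffices to treat $u=\Pi_\mu u$ (so that $u\in C^\infty$), and, since $a\|\cdot\|_{L^2(dx)}\le\|\cdot\|_{L^2(\kappa dx)}\le A\|\cdot\|_{L^2(dx)}$, we do not distinguish the two norms below. Set
\[
 w(x,y)=\int_0^\mu \cosh(\lambda y)\,dm_\lambda u,\qquad (x,y)\in\RR^{d}\times\RR .
\]
Since $0\le\cosh(\lambda y)\le\cosh(\mu|y|)$ for $\lambda\in[0,\mu]$, the bound \eqref{bound} gives $\|w(\cdot,y)\|_{L^2}\le\cosh(\mu|y|)\,\|u\|_{L^2}$, and applying the same computation to $\lambda\sinh(\lambda y)$ and to $\lambda\cosh(\lambda y)$ shows that $w\in H^1_{\mathrm{loc}}(\RR^{d+1})$, in fact $w\in C^\infty$. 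Because $-\Delta$ acts as multiplication by $\lambda^2$ on $dm_\lambda$ and $\kappa$ is independent of $y$, $w$ solves
\[
 \operatorname{div}_{x,y}\bigl(\kappa\,\widetilde g\,\nabla_{x,y}w\bigr)=0\quad\text{in }\RR^{d+1},\qquad \widetilde g=\operatorname{diag}(g,1),
\]
a uniformly elliptic divergence-form equation with Lipschitz coefficients whose constants are controlled by $a$ and $A$; moreover $w(x,0)=\Pi_\mu u=u(x)$.

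Next, fix $R'\ge\max(R,\sqrt d)$ and cover $\RR^d$ by the balls $B(k,R')$, $k\in\ZZ^d$, which have bounded overlap; in $\RR^{d+1}$ introduce the concentric regions
\[
 B(k,R')\times\{0\}\ \subset\ Q_k^{\mathrm{mid}}:=B(k,2R')\times(-\tfrac12,\tfrac12)\ \subset\ Q_k:=B(k,4R')\times(-1,1).
\]
On $Q_k$ the set $\omega\cap B(k,R')$, viewed inside the slice $B(k,R')\times\{0\}\subset Q_k$, has, by \eqref{mesure} (resp.\ \eqref{mesurebis}), Lebesgue measure (resp.\ $n$-Hausdorff content) at least $\delta$; taking in the second case $n<d$ close enough to $d$ for the dimensional hypothesis of the Logunov--Malinnikova estimate to hold, the boundary version of their propagation of smallness---in the variable-coefficient form of \cite{BuMo19}, applied after translating $Q_k$ to the origin so that the exponent and the constant are the same for every $k$---yields
\[
 \|w\|_{L^\infty(Q_k^{\mathrm{mid}})}\ \le\ C\,\|w\|_{L^\infty(Q_k)}^{1-\alpha}\,\varepsilon_k^{\alpha},
\]
with $\alpha\in(0,1)$ and $C$ depending only on $d,a,A,R',\delta$ (and $n$), where $\varepsilon_k=\|u\|_{L^\infty(\omega\cap B(k,R'))}$ in the case \eqref{mesurebis} and $\varepsilon_k=\delta^{-1/2}\|u\|_{L^2(\omega\cap B(k,R'))}$ in the case \eqref{mesure} (using $w(\cdot,0)=u$ and, for the measure version, Cauchy--Schwarz to bound the average of $|w|$ over the data set by its $L^2$ norm). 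Combining this with the interior $L^\infty$ bound $\|w\|_{L^\infty(Q_k)}\le C\|w\|_{L^2(B(k,8R')\times(-2,2))}$ for solutions of the elliptic equation above, and with $w(\cdot,0)=u$ once more, we get
\[
 \|u\|_{L^2(B(k,R'))}\ \le\ C\,\|w\|_{L^2(B(k,8R')\times(-2,2))}^{1-\alpha}\,\varepsilon_k^{\alpha}.
\]

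The last step is to square, sum over $k\in\ZZ^d$, and use H\"older's inequality with the conjugate exponents $\tfrac1{1-\alpha}$ and $\tfrac1\alpha$:
\[
 \|u\|_{L^2(\RR^d)}^2\ \le\ \sum_k\|u\|^2_{L^2(B(k,R'))}\ \le\ C\Bigl(\sum_k\|w\|_{L^2(B(k,8R')\times(-2,2))}^2\Bigr)^{1-\alpha}\Bigl(\sum_k\varepsilon_k^2\Bigr)^{\alpha}.
\]
By the bounded overlap of the enlarged cylinders and the $L^2$ bound on $w$ from \eqref{bound}, the first factor is $\le\bigl(Ce^{C\mu}\|u\|_{L^2(\RR^d)}^2\bigr)^{1-\alpha}$; and $\sum_k\varepsilon_k^2\le C\|u\|^2_{L^2(\omega)}$ in case \eqref{mesure}, while $\sum_k\varepsilon_k^2\le C\sum_k\|u\|^2_{L^\infty(\omega\cap B(k,R))}$ in case \eqref{mesurebis} (re-covering each $B(k,R')$ by boundedly many $B(j,R)$). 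Dividing through by $\|u\|^{2(1-\alpha)}_{L^2(\RR^d)}$ and raising to the power $1/\alpha$ turns the displayed inequality into $\|u\|^2_{L^2(\RR^d)}\le Ce^{C\mu}\sum_k\varepsilon_k^2$, which is \eqref{spec2}; using $\sum_k\varepsilon_k^2\le C\|u\|^2_{L^2(\omega)}$ and taking square roots gives \eqref{spec1}.

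The principal difficulty is precisely this global-from-local passage. On the one hand, the Logunov--Malinnikova estimate must be applied to infinitely many regions with a \emph{single} exponent $\alpha$ and constant $C$, which is exactly what the uniform ellipticity and the uniform Lipschitz bound \eqref{ellipticity} secure, the translated coefficients all lying in a fixed bounded set. On the other hand---and this is the real point---one cannot afford to replace $\|w\|_{L^\infty(Q_k)}$ by the only globally available majorant $Ce^{C\mu}\|u\|_{L^2(\RR^d)}$, since that is not summable in $k$; what rescues the argument is that the three-region inequality retains its \emph{product} structure, so that H\"older decouples the sum and a single global $L^2$ estimate for $w$---itself of size only $e^{C\mu}$---does the job. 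A final technical point, handled as in \cite{BuMo19}, is that the propagation of smallness is needed in its boundary form, with the "small" set sitting on the hyperplane $\{y=0\}$ rather than in the interior of $Q_k$; this is what forces the restriction that $n$ be close to $d$ in \eqref{spec2}.
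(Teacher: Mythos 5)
Your overall architecture — an elliptic lift in one extra variable, local Logunov--Malinnikova propagation on a bounded-overlap cover, elliptic interior estimates to replace $L^\infty$ norms on the large sets by $L^2$ norms, and a decoupling of the $(1-\alpha,\alpha)$-product when summing over $k$ — is the same as the paper's, and your use of H\"older with exponents $\tfrac1{1-\alpha},\tfrac1\alpha$ in place of the paper's Young's-inequality-with-a-parameter-$D$-plus-Sobolev-type-lemma is a perfectly valid (and slightly cleaner) way to close the gluing step. However, two steps do not survive scrutiny as written.

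\emph{First, the choice of lift and the Logunov--Malinnikova theorem you invoke.} You take the even lift $w(x,y)=\int_0^\mu\cosh(\lambda y)\,dm_\lambda u$ and apply a propagation-of-smallness inequality to $w$ itself with the small set $E_k=\{0\}\times(\omega\cap B(k,R))$ lying in the hyperplane $\{y=0\}\subset\RR^{d+1}$. But the Logunov--Malinnikova result for solutions (as opposed to gradients) requires the small set to have positive $(d+1)$-measure, or at least Hausdorff content close to $d+1$; a subset of a hyperplane has Hausdorff dimension at most $d$, so this hypothesis is never met — not even in the positive-measure case~\eqref{mesure}. What does apply to such thin sets is the gradient version, \cite[Theorem 5.1]{LoMa17}, which tolerates Hausdorff content down to roughly $d-\epsilon_0$ in $\RR^{d+1}$. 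This is precisely why the paper takes the \emph{odd} lift $v_\mu(t,x)=\int_0^\mu\frac{\sinh(\lambda t)}{\lambda}\,dm_\lambda u$: then $\nabla_{t,x}v_\mu|_{t=0}=(u,0)$, so $|\nabla_{t,x}v_\mu|=|u|$ on the hyperplane, and the gradient LoMa inequality delivers control of $u$ on $\omega\cap B(k,R)$. With your even lift, $\partial_y w|_{y=0}=0$, so $|\nabla_{x,y}w|_{y=0}=|\nabla_x u|$, which does not recover $u$. You should switch to $v_\mu$ and run the argument for $\nabla_{t,x}v_\mu$; your subsequent H\"older gluing then works verbatim, provided you use the interior $C^{1,\alpha}$ estimate of Han--Lin (which the paper cites) instead of a bare De Giorgi--Nash--Moser $L^\infty$ bound, so as to control $\sup|\nabla v_\mu|$ rather than $\sup|v_\mu|$.

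\emph{Second, the $L^\infty\to L^2$ passage on the small set in the positive-measure case.} You propose to set $\varepsilon_k=\delta^{-1/2}\|u\|_{L^2(\omega\cap B(k,R'))}$ by ``Cauchy--Schwarz to bound the average of $|w|$ over the data set by its $L^2$ norm.'' This goes in the wrong direction: LoMa gives an upper bound featuring $\sup_{E_k}|u|$, and Cauchy--Schwarz only yields $\sup_{E_k}|u|\ge |E_k|^{-1/2}\|u\|_{L^2(E_k)}$, so replacing the sup by $\delta^{-1/2}\|u\|_{L^2}$ would \emph{weaken} the small factor and render the three-region inequality false. The paper's actual argument is a truncation: set $a=\bigl(\epsilon\,\sup_{\mathcal{K}_p}|\nabla v_\mu|/(\sup_{\Omega_p}|\nabla v_\mu|)^{1-\alpha}\bigr)^{1/\alpha}$ and $E'_p=\{x\in E_p:|\nabla v_\mu(x)|\le a\}$; if $|E'_p|\ge\delta/2$ then applying LoMa with $E'_p$ in place of $E_p$ (permissible since the Hausdorff content is still bounded below) forces $\sup_{\mathcal{K}_p}|\nabla v_\mu|\le C\epsilon\,\sup_{\mathcal{K}_p}|\nabla v_\mu|$, a contradiction for $C\epsilon<1$. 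Hence $|E_p\setminus E'_p|\ge\delta/2$ and $\int_{E_p}|u|^2\ge a^2\delta/2$, which is the correct route from the sup to the $L^2$ norm. You need this step; Cauchy--Schwarz alone does not suffice.

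With those two corrections — the $\sinh$-lift with gradient LoMa, and the truncation trick in place of Cauchy--Schwarz — your H\"older gluing gives an argument equivalent to, and marginally streamlined relative to, the paper's Young's-inequality-plus-Lemma route.
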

Let us now give three applications to the observation and control of the heat equation. The first application is very standard and follows from the works by Miller~\cite{Mi10},  Phung-Wang~\cite{PhWa13} and Apraiz, Escauriaza, Wang, Zhang~\cite{AEWZ14,ApEs13}. We also refer to~\cite[Section 5] {BuMo19}.
\begin{thm} [Null controllability from sets of positive measure]\label{control}
Let $F \subset (0,T) $ a set of positive measure. Let $E\subset \mathbb{R}^d$ satisfying~\eqref{mesure}. Then, there exists $C>0$ such that for any $u_0 \in L^2(M)$ the solution 
$u = e^{t \Delta} u_0$ to the heat equation 
$$\partial _t u - \Delta u =0,  \qquad  u \mid_{\left\{ t=0 \right\} } = u_0,
$$ satisfies (recall that $\kappa$ satisfies~\eqref{ellipticity})
\begin{equation}\label{obs} 
\| e^{T \Delta } u_0 \|^2_{L^2(M)} \leq C \int_{F\times E} | u|^2(t,x) \kappa(x)dx dt.
\end{equation}
As a consequence, for all $u_0, v_0 \in L^2(M)$ there exists $f\in L^2(F\times E)$ such that the solution to 
\begin{equation*}
(\partial_t - \Delta ) u = f1_{F} (t,x), \qquad u \mid_{\left\{ t=0 \right\} } = u_0,\\
\end{equation*}
satisfies 
$$ u \mid_{\left\{ t=T \right\} } =e^{T\Delta} v_0.$$
\end{thm}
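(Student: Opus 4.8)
The plan is to deduce \thmref{control} from the spectral estimate~\eqref{spec1} of \thmref{princ} along the classical chain: spectral inequality $\Rightarrow$ final-time observability $\Rightarrow$ null controllability. First I would reduce to an observability inequality. By linearity, subtracting the free trajectory $e^{t\Delta}v_0$ reduces steering $u_0$ to $e^{T\Delta}v_0$ to steering $u_0-v_0$ to $0$, i.e.\ to the null controllability of the heat equation from $F\times E$ in time $T$. By the Hilbert Uniqueness Method this is equivalent, by duality, to an observability estimate for the adjoint (backward) heat equation; since $e^{t\Delta}$ is self-adjoint and $t\mapsto T-t$ turns the backward equation into a forward one, that observability estimate is exactly~\eqref{obs} with $F$ replaced by $T-F$ (still of positive measure). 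So the whole statement reduces to~\eqref{obs}.

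Next I would prove~\eqref{obs} by the Lebeau--Robbiano method, in the version adapted to an observation set of positive measure in time. The two inputs are: (i) the low-frequency estimate $\|\Pi_\mu w\|_{L^2(\RR^d;\kappa dx)}\le Ce^{C\mu}\|\Pi_\mu w\|_{L^2(E;\kappa dx)}$, which is~\eqref{spec1} applied to $\Pi_\mu w$; and (ii) the dissipation bound $\|(1-\Pi_\mu)e^{\tau\Delta}\|_{L^2(\kappa dx)\to L^2(\kappa dx)}\le e^{-\mu^2\tau}$, which follows from the spectral theorem exactly as~\eqref{bound} (now with the supremum over $\{\lambda\ge\mu\}$ and $\phi(\lambda)=e^{-\tau\lambda^2}$). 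Writing $u=e^{t\Delta}u_0$, decomposing $u=\Pi_\mu u+(1-\Pi_\mu)u$ on a short time interval $(s,t)$ and optimising in $\mu$ produces a parabolic interpolation inequality of the form $\|e^{t\Delta}u_0\|_{L^2(\RR^d;\kappa dx)}\le\bigl(e^{C/(t-s)}\|e^{t\Delta}u_0\|_{L^2(E;\kappa dx)}\bigr)^{\theta}\|u_0\|_{L^2(\RR^d;\kappa dx)}^{1-\theta}$ for some $\theta\in(0,1)$ and all $0\le s<t\le T$. Then, choosing a Lebesgue density point $\ell\in(0,T)$ of $F$ and a decreasing sequence $t_k\downarrow\ell$ with geometrically small gaps — so that $|F\cap(t_{k+1},t_k)|\ge c\,(t_k-t_{k+1})$ for $k$ large — I would feed this interpolation inequality into the standard telescoping iteration of Phung--Wang and Apraiz--Escauriaza--Wang--Zhang (using that $t\mapsto\|e^{t\Delta}u_0\|_{L^2(\kappa dx)}$ is nonincreasing), sum the resulting recursion over the disjoint intervals $(t_{k+1},t_k)\subset(0,T)$, and arrive at $\|e^{\ell\Delta}u_0\|^2_{L^2(\RR^d;\kappa dx)}\le C\int_{F\times E}|u|^2(t,x)\,\kappa(x)\,dx\,dt$; one last use of contractivity replaces $\ell$ by $T$ and yields~\eqref{obs}.

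The routine ingredients — the HUM duality and the $L^2(\kappa dx)$ semigroup bounds — are standard and are carried out in detail in~\cite{Mi10,PhWa13,AEWZ14,ApEs13} and \cite[Section~5]{BuMo19}. The only delicate point is the telescoping step with $F$ merely of positive measure rather than an interval: one must track the growth of the constants through the iteration and use the density point of $F$ to secure a fixed proportion of observation time at every dyadic scale. Because the underlying domain here is the non-compact $\RR^d$, I would in addition check that inputs (i) and (ii) — both consequences of the spectral theorem for $-\Delta$ on $L^2(\RR^d;\kappa\,dx)$, together with the global estimate~\eqref{spec1} of \thmref{princ} — hold verbatim, so that non-compactness introduces no new obstruction; this is precisely the point of the present article.
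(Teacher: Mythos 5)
Your proposal is correct and follows essentially the same route the paper intends: the paper does not give a self-contained proof of \thmref{control} but instead declares it a ``very standard'' consequence of~\eqref{spec1} via the Lebeau--Robbiano strategy adapted to a time-observation set of positive measure, citing Miller~\cite{Mi10}, Phung--Wang~\cite{PhWa13}, Apraiz--Escauriaza--Wang--Zhang~\cite{AEWZ14,ApEs13} and~\cite[Section~5]{BuMo19}; your reconstruction (HUM duality, low-/high-frequency splitting with $\Pi_\mu$, parabolic interpolation, telescoping iteration around a Lebesgue density point of $F$) is exactly the content of those references, and you correctly flag that the only new point on $\RR^d$ is that the spectral inequality and the semigroup bounds are already available globally, which is what \thmref{princ} and~\eqref{bound} supply.
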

The second application is a control and observability result from sets of positive Hausdorff content. It follows from~\eqref{spec2} following the proof of \cite[Theorem 3]{BuMo19} given  in~\cite[Section 5]{BuMo19}. The only difference is that in~\cite{BuMo19} we use the duality between continuous functions and Radon measures while here we use the duality between the spaces $G, \widetilde{G}$ defined below. 
\begin{thm} [Null controllability from sets of positive Hausdorff content]\label{controlbis}
Let $F \subset (0,T) $ be a set of positive measure. Let $E\subset \mathbb{R}^d$ satisfying~\eqref{mesurebis}. Then, there exists $C>0$ such that for any $u_0 \in L^2(M)$ the solution 
$u = e^{t \Delta} u_0$ to the heat equation 
$$\partial _t u - \Delta u =0,  \qquad     u \mid_{\left\{ t=0 \right\}} = u_0,
$$ satisfies (recall that $\kappa$ satisfies~\eqref{ellipticity})
\begin{equation}\label{obsbis} 
\| e^{T \Delta } u_0 \|^2_{L^2(M)} \leq C \sum_{k \in \mathbb{Z}^d} \int_F \sup_{ x\in E\cap B(k, R)} | u|^2(t,x)  dt.
\end{equation}
Let us denote by $G$ the set of functions on $F$ with values locally bounded Radon measures on $\overline{E}$, such that 
$$ \| f\|_{G}^2 = \sum_{k\in \mathbb{Z}^d} \int_{F} |f|^2 (\overline{E}\cap B(k,R)) (t) dt < +\infty.
$$
Remark that $G$ is the dual space of $\widetilde{G}$, defined as the space of functions $g$ on $F$ with values continuous functions on $\overline{E}$  such that 
$$ \| g\|^2_{\widetilde{G}}= \sum_{k\in \mathbb{Z}^d} \int_{F} \sup_k |f (E\cap B(k,R))|^2 (t) dt < + \infty.
$$
We endow $G$ with its natural norm. 
We deduce from~\eqref{obsbis} that for all $u_0, v_0 \in L^2(M)$ there exists $f\in G$ such that the solution to 
\begin{equation}\label{eqcont}
\begin{gathered}
(\partial_t - \Delta ) u = f1_{F \times \overline{E}} (t,x), \qquad u \mid_{ \left\{ t=0 \right\} } = u_0,\\
\end{gathered}
\end{equation}
satisfies 
$$ u \mid_{ \left\{t=T\right\}  } =e^{T\Delta} v_0.$$
\end{thm}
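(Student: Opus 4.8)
The argument splits into two independent parts: first the observability inequality \eqref{obsbis}, then its dualisation into the controllability statement, exactly along the lines of \cite[Section~5]{BuMo19}. The only genuinely new point compared to the compact case is that the observation is the seminorm $N(g)^2:=\sum_{k\in\mathbb Z^d}\sup_{x\in\overline E\cap B(k,R)}|g(x)|^2$, so one must replace $L^2(\omega)$ by $N(\cdot)$ throughout and, in the duality step, replace the pairing of continuous functions with Radon measures by the pairing $G=\widetilde G^{\,*}$. The inputs for the observability are: the spectral estimate \eqref{spec2}; the dissipativity $\|e^{t\Delta}(\Id-\Pi_\mu)\|_{L^2(M)\to L^2(M)}\le e^{-\mu^2 t}$; and the smoothing bound
\[
N\big(e^{t\Delta}h\big)\ \le\ C\,t^{-d/4}\,\|h\|_{L^2(M)},\qquad 0<t\le 1,
\]
with $C$ depending only on $d,R$ and the constants in \eqref{ellipticity}. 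This last estimate follows from Aronson's Gaussian upper bound for the heat kernel of a uniformly elliptic divergence--form operator — available under \eqref{ellipticity}, the operator $-\Delta$ being self--adjoint and of divergence form for the measure $\kappa\,dx$ — by splitting $e^{t\Delta}h(x)=\int p_t(x,y)h(y)\,dy$ into unit cells and summing the resulting Gaussian tails over $\mathbb Z^d$, using the finite--overlap property of $\{B(k,R)\}_{k\in\mathbb Z^d}$ to keep the constant uniform in $k$.

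Granting this, \eqref{obsbis} is obtained by the Lebeau--Robbiano telescoping scheme in the form of Miller~\cite{Mi10}, Phung--Wang~\cite{PhWa13} and Apraiz--Escauriaza--Wang--Zhang~\cite{AEWZ14,ApEs13}, as in \cite[Section~5]{BuMo19}. The elementary step is: for $0<t'<t$, $\tau:=t-t'\le 1$, $u(s):=e^{s\Delta}u_0$, split $u(t)=\Pi_\mu u(t)+(\Id-\Pi_\mu)u(t)$, estimate the high-frequency part by dissipativity, apply \eqref{spec2} to the low-frequency part and bound $N(\Pi_\mu u(t))$ by $N(u(t))$ plus the smoothing bound applied to $(\Id-\Pi_\mu)e^{(\tau/2)\Delta}u(t')$, which yields
\[
\|u(t)\|_{L^2(M)}^2\ \le\ C\,e^{C\mu}\,N(u(t))^2\ +\ \big(C\,e^{C\mu}\tau^{-d/2}e^{-\mu^2\tau}+e^{-2\mu^2\tau}\big)\,\|u(t')\|_{L^2(M)}^2.
\]
Since the exponent of $\mu$ in \eqref{spec2} is $1<2$, choosing $\mu\sim K/\tau$ with $K$ large makes the bracket $\le\frac12$ while the prefactor is only $e^{CK/\tau}$; telescoping over a sequence $t_m\uparrow\ell$ of Lebesgue density points of $F$ (so that $|F\cap(t_m,t_{m-1})|\gtrsim t_{m-1}-t_m$) and summing the series — legitimate precisely because that exponent is $<2$, this being the place where the whole scheme would fail for the eigenfunction problem's exponent $1$ but works here — gives $\|e^{\ell\Delta}u_0\|_{L^2(M)}^2\le C\int_F N(e^{t\Delta}u_0)^2\,dt$. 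As $\ell\le T$ and $e^{(T-\ell)\Delta}$ is an $L^2(M)$--contraction, \eqref{obsbis} follows.

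For the controllability statement we dualise \eqref{obsbis}, again following \cite[Section~5]{BuMo19} but with the pairing $G=\widetilde G^{\,*}$. By linearity it suffices to reach $z_0:=e^{T\Delta}(v_0-u_0)$ at time $T$ starting from $0$; after a time reversal $t\mapsto T-t$ (which sends $F$ to another set of positive measure), \eqref{obsbis} is equivalent to
\[
\|e^{T\Delta}\phi_T\|_{L^2(M)}\ \le\ C\,\|S\phi_T\|_{\widetilde G},\qquad S\phi_T:=\mathbf 1_{F\times\overline E}\,e^{(T-\cdot)\Delta}\phi_T,
\]
where $S$ is a bounded linear map from a dense subspace — say $\CIc(\mathbb R^d)$, on which $\|S\phi_T\|_{\widetilde G}<\infty$ thanks to the Gaussian bounds used above — into $\widetilde G$. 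By Duhamel's formula, finding $f\in G$ for which the solution of \eqref{eqcont} satisfies $u|_{t=T}=e^{T\Delta}v_0$ amounts to finding $f$ with $\langle f,S\phi_T\rangle_{G,\widetilde G}=\langle z_0,\phi_T\rangle_{L^2}$ for all $\phi_T$. The displayed inequality shows that the functional $\phi_T\mapsto\langle z_0,\phi_T\rangle_{L^2}$ satisfies $|\langle z_0,\phi_T\rangle_{L^2}|=|\langle v_0-u_0,e^{T\Delta}\phi_T\rangle_{L^2}|\le C\|v_0-u_0\|_{L^2}\|S\phi_T\|_{\widetilde G}$, hence extends to a bounded functional on the closure of the range of $S$ in $\widetilde G$; the Hahn--Banach theorem together with the identification $\widetilde G^{\,*}=G$ produces $f\in G$ with $\|f\|_G\le C\|v_0-u_0\|_{L^2}$ representing it, and unravelling the pairing gives the claimed control.

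\textbf{Main obstacle.} Conceptually everything reduces to the spectral estimate \eqref{spec2}, which is already supplied by \thmref{princ}; the new work is entirely in the two effects of non-compactness. The first is the uniform--in--$\mathbb Z^d$ heat--smoothing bound for $N$ stated above (and checking that $\|S\phi_T\|_{\widetilde G}<\infty$ on a dense subspace), which is routine but requires the Aronson bounds and a careful summation over cells. The second, more substantial, is the clean identification $G=\widetilde G^{\,*}$ — a Riesz--type representation for $\ell^2(\mathbb Z^d)$--valued, $F$--indexed families of Radon measures on $\overline E$ — since this is exactly what guarantees the Hahn--Banach step lands in the space $G$ with the stated norm. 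Once these are in place, the rest is the standard parabolic telescoping of \cite{Mi10,PhWa13,AEWZ14,ApEs13} and the convex duality of \cite[Section~5]{BuMo19}, transcribed verbatim.
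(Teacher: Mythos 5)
Your proposal is correct and follows the same overall route as the paper: the spectral inequality \eqref{spec2}, combined with heat dissipativity and a uniform $L^2\to N$ smoothing estimate, runs the Lebeau--Robbiano/Miller/Phung--Wang/AEWZ telescoping argument to obtain \eqref{obsbis}, and controllability follows by HUM duality against the pairing $G=\widetilde{G}^{\,*}$ --- precisely the two points the paper itself credits to adapting \cite[Section~5]{BuMo19} with the $G,\widetilde G$ duality replacing the continuous-function/Radon-measure duality. The only technical divergence worth recording is that you derive the uniform-in-$\mathbb{Z}^d$ smoothing bound $N(e^{t\Delta}h)\lesssim t^{-d/4}\|h\|_{L^2}$ from Aronson's Gaussian upper bounds plus a cell-wise summation of the Gaussian tails, whereas the paper's remark instead invokes the spectral Sobolev embedding $D(-\Delta)^\sigma\hookrightarrow C^0$ of \cite[Proposition~2.1]{BuMo19} and then dualizes it to give meaning to the measure-valued source term; both work under the Lipschitz ellipticity \eqref{ellipticity}, the Aronson route making the translation-uniform $\ell^2(\mathbb{Z}^d)$-summation explicit and the spectral-embedding route staying inside the functional calculus already used elsewhere in the paper. (A minor wording slip: the parenthetical ``would fail for the eigenfunction problem's exponent~$1$'' clashes with your own statement that the exponent in \eqref{spec2} \emph{is}~$1$; what you mean is that the scheme closes because $1<2$, the dissipation exponent, and would be critical were the two to coincide.)
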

\begin{rem} For $\chi \in C^\infty_0 ( \mathbb{R}^d)$, from~\cite[Proposition 2.1]{BuMo19}, for $\sigma >0$ large enough the map 
$$ u \in D( - \Delta)^\sigma \mapsto \chi u  \in C^0 $$ is well defined. We deduce by duality that if $\mathcal{M}$ is the set or Radon measures on $\overline {E}$, the map
$$ v \in \mathcal{M} \mapsto \chi v \in D( - \Delta)^{-\sigma }
$$ 
is also well defined and continuous.
As a consequence, $G$ is continously embeddeed in 
$$ \mathcal{H} = L^2(F;  D( - \Delta)^{-\sigma }), 
$$
and equation~\eqref{eqcont} is just solved by
$$ u = e^{t \Delta} u_0 + \int_0^t e^{(t-s)\Delta} f1_{F \times\overline{E}} (s,x) ds .
$$
\end{rem}

The third application is the observation and control for discrete times from~\cite[Theorem 4]{BuMo19}.
\begin{thm}[Observability and exact controllability using controls localised at fixed times]\label{control-hausbis} 
 Let  $m>0$, $\tau \in (0,1)$ and $D>0$. There exists  $C>0$,  such that if  $E\subset \mathbb{R}^d$ satisfies ~\eqref{mesure} 
then for any sequence $(s_n)_{n\in \mathbb{N}}$,
$$J= \{ 0<\cdots <  s_n<  \cdots  < s_0< T\} $$ converging not too fast to $0$, i.e.,
$$  \exists \tau \in (0,1); \forall n \in \N,  (s_{n} - s_{n+1}) \geq \tau ( s_{n-1} - s_{n}), $$
we have that  for any $u_0 \in L^2(M)$, the solution 
$u = e^{t \Delta} u_0$ to the heat equation 
$$\partial _t u - \Delta u =0,   \qquad u \mid_{\left\{t=0\right\}} = u_0,
$$ satisfies 
\begin{equation}\label{obster} 
\| e^{T \Delta } u_0 \|^2_{L^2(M)} \leq   C \sup_{n \in \N} e^{-\frac D{s_n- s_{n+1}} }\int_{E} |e^{{s_n\Delta} } u_0|^2 (s_n,x) dx.
 \end{equation}

As a consequence, given any sequence $(t_n)_{n\in \mathbb{N}}$,
$$J= \{ 0< t_0 < \cdots< t_n < \cdots< T\} $$ converging not too fast to $T$,
\begin{equation}\label{nottoofast} \exists 0<\tau<1; \forall n \in \N,  (t_{n+1} - t_{n} )\geq \tau ( t_{n} - t_{n-1}),
\end{equation} for all $u_0, v_0 \in L^2(M)$ 
there exists $(f_j) $ a sequence of functions on  $E_1$ such that 
$$ \sum_j e^{\frac D{(t_{j+1}-t_j)} }\|f_j\|_{L^2(E)}  < +\infty, $$ and the solution to 
\begin{equation}\label{solmesurebis}
(\partial_t - \Delta ) u = \sum_{j=1}^{+\infty} \delta_{t=t_j}\otimes f_j (x)1_{E_1},  \quad u \mid_{t=0} = u_0\end{equation}
satisfies 
$$ u \mid_{t> T} =e^{t\Delta} v_0.$$
\end{thm}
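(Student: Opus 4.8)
The plan is to run the Lebeau--Robbiano/telescoping-series method exactly as in the proof of \cite[Theorem~4]{BuMo19}; the only new ingredient is the \emph{global} spectral inequality \eqref{spec1} of Theorem~\ref{princ} on $\mathbb{R}^d$, which here replaces the compact-manifold spectral inequality used there. First I would establish a one-interval interpolation estimate: there is $C_0\ge 1$ such that for all $\mu>0$, $\rho>0$ and $v\in L^2(M)$,
\begin{equation*}
\|e^{\rho\Delta}v\|_{L^2(M)}^2\ \le\ C_0\,e^{C_0\mu}\,\|e^{\rho\Delta}v\|_{L^2(E)}^2\ +\ C_0\,e^{\,C_0\mu-2\rho\mu^2}\,\|v\|_{L^2(M)}^2 .
\end{equation*}
Indeed, writing $w=e^{\rho\Delta}v=\Pi_\mu w+(I-\Pi_\mu)w$, the dissipation bound $\|(I-\Pi_\mu)e^{\rho\Delta}\|_{L^2(M)\to L^2(M)}\le e^{-\rho\mu^2}$ (a consequence of the spectral theorem, just like \eqref{bound}) gives $\|(I-\Pi_\mu)w\|_{L^2(M)}\le e^{-\rho\mu^2}\|v\|_{L^2(M)}$, while \eqref{spec1} applied to $\Pi_\mu w$ yields $\|\Pi_\mu w\|_{L^2(M)}\le Ce^{C\mu}\bigl(\|w\|_{L^2(E)}+\|(I-\Pi_\mu)w\|_{L^2(M)}\bigr)$; one then combines $\|w\|\le\|\Pi_\mu w\|+\|(I-\Pi_\mu)w\|$ and squares.

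Next I would telescope. Set $\rho_n=s_n-s_{n+1}$, $x_n=\|e^{s_n\Delta}u_0\|_{L^2(M)}^2$, $y_n=\|e^{s_n\Delta}u_0\|_{L^2(E)}^2$, and apply the one-interval estimate with $v=e^{s_{n+1}\Delta}u_0$, $\rho=\rho_n$, $\mu=\beta/\rho_n$ for a large constant $\beta$. For $\beta\ge C_0$ this gives $x_n\le C_0 e^{C_0\beta/\rho_n}y_n+C_0 e^{-\beta^2/\rho_n}x_{n+1}$. Iterating from $n=0$, using that $e^{t\Delta}$ is a contraction (hence $x_N\le\|u_0\|_{L^2(M)}^2$) and that $\rho_k<s_0<T$ for all $k$ (hence $\sum_{k<N}\rho_k^{-1}\ge N/T$, so the remainder $\bigl(\prod_{k<N}C_0 e^{-\beta^2/\rho_k}\bigr)x_N\le(C_0 e^{-\beta^2/T})^N\|u_0\|_{L^2(M)}^2\to0$ once $\beta^2>T\log C_0$), one obtains $x_0\le\sum_{n\ge0}\bigl(\prod_{k<n}C_0 e^{-\beta^2/\rho_k}\bigr)C_0 e^{C_0\beta/\rho_n}y_n$. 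Inserting $y_n\le e^{D/\rho_n}M$ with $M=\sup_m e^{-D/\rho_m}y_m$, the $n$-th coefficient is $C_0^{\,n+1}e^{-\beta^2\sum_{k<n}\rho_k^{-1}+(C_0\beta+D)/\rho_n}$. Here the hypotheses enter: $\rho_n\ge\tau\rho_{n-1}$ gives $\sum_{k<n}\rho_k^{-1}\ge\rho_{n-1}^{-1}\ge\tau/\rho_n$, while $\rho_k<T$ gives $\sum_{k<n}\rho_k^{-1}\ge n/T$; averaging, $\sum_{k<n}\rho_k^{-1}\ge\tfrac12(\tau/\rho_n+n/T)$. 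Choosing $\beta$ large enough that $\tfrac12\beta^2\tau\ge C_0\beta+D$ and $\beta^2>2T\log C_0$ (so $\beta=\beta(C_0,\tau,D,T)$) makes the exponent $\le-\tfrac{\beta^2 n}{2T}$ for $n\ge1$, so $\sum_{n\ge1}(\cdots)\le C_0\sum_{n\ge1}(C_0 e^{-\beta^2/(2T)})^n<\infty$; the $n=0$ coefficient $C_0 e^{(C_0\beta+D)/\rho_0}$ is finite thanks to the lower bound $\rho_0=s_0-s_1\ge m$ on the first gap (this is what the parameter $m$ is for). Hence $x_0\le CM$, and since $\|e^{T\Delta}u_0\|_{L^2(M)}\le\|e^{s_0\Delta}u_0\|_{L^2(M)}$ this is exactly \eqref{obster}.

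Finally, the exact controllability statement follows from \eqref{obster} by the standard Hilbert Uniqueness Method duality, carried out as in \cite[Section~5]{BuMo19}: one sets $s_n:=T-t_n$, so \eqref{nottoofast} becomes the ``not too fast'' hypothesis on $(s_n)$; by linearity one reduces to $u_0=0$ with target $w_0:=v_0-u_0$; and the impulses $f_j$ are recovered from the minimiser of the convex functional
\begin{equation*}
\mathcal{J}(\varphi_0)=\tfrac12\sum_{n\ge0}e^{-D/(t_{n+1}-t_n)}\bigl\|e^{(T-t_n)\Delta}\varphi_0\bigr\|_{L^2(E)}^2-\bigl\langle e^{T\Delta}w_0,\varphi_0\bigr\rangle_{L^2(M)},\qquad\varphi_0\in L^2(M),
\end{equation*}
whose coercivity (in the sense appropriate to parabolic problems, cf.\ \cite[Section~5]{BuMo19}) is exactly \eqref{obster} after $\sup_n\le\sum_n$; the supremum ($\ell^\infty$) structure of the right-hand side of \eqref{obster} is what produces the $\ell^1$-type summability $\sum_j e^{D/(t_{j+1}-t_j)}\|f_j\|_{L^2(E)}<\infty$ in \eqref{solmesurebis}, and with it the series defining $u$ in \eqref{solmesurebis} converges in $L^2(M)$ because $\sum_j\|f_j\|_{L^2(E)}<\infty$ and each $e^{(T-t_j)\Delta}$ is a contraction. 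The genuinely new point compared with \cite{BuMo19} is only the non-compactness of $\mathbb{R}^d$, which Theorem~\ref{princ} takes care of; granting that, the one slightly delicate step is the sequence-uniform summation just above, where the cut-offs $\mu_n\sim\beta/\rho_n$ must be calibrated in terms of $\tau$, $D$, $T$, and where $m$ is used to control the first sub-interval.
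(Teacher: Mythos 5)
Your overall route is the one the paper itself intends: the paper offers no self-contained proof of Theorem~\ref{control-hausbis}, but reduces it to the scheme of \cite[Theorem 4]{BuMo19} (Lebeau--Robbiano telescoping plus duality), with the global spectral inequality \eqref{spec1} replacing the compact-manifold one. Your proof of the observability estimate \eqref{obster} is correct and complete: the one-interval estimate from \eqref{spec1} combined with the dissipation bound for $(I-\Pi_\mu)e^{\rho\Delta}$, the calibration $\mu=\beta/\rho_n$, the use of both $\rho_n\ge\tau\rho_{n-1}$ and $\sum_k\rho_k\le T$, and the reading of $m$ as a lower bound on the first gap (the statement in the paper leaves $m$ and $E_1$ undefined, and your interpretation is the natural one).

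There is, however, a genuine (though repairable) gap in the controllability step. The quadratic functional you minimise, with weights $e^{-D/(t_{n+1}-t_n)}$, yields through its Euler--Lagrange equation the controls $f_j=e^{-D/\rho_j}\,1_E\,e^{(T-t_j)\Delta}\hat\varphi_0$ with $\rho_j=t_{j+1}-t_j$, and the only information on the minimiser is $\sum_j e^{-D/\rho_j}\bigl\|e^{(T-t_j)\Delta}\hat\varphi_0\bigr\|_{L^2(E)}^2<\infty$, i.e.\ $\sum_j e^{D/\rho_j}\|f_j\|_{L^2(E)}^2<\infty$. This does \emph{not} give the claimed $\sum_j e^{D/\rho_j}\|f_j\|_{L^2(E)}<\infty$: the Cauchy--Schwarz step would require $\sum_j e^{D/\rho_j}<\infty$, which is false since $\rho_j\to0$; so the summability on which your convergence of the series defining $u$ rests is not established by the functional as written. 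Two standard fixes: (i) do as in \cite[Section 5]{BuMo19} and use the $\ell^\infty$ structure of \eqref{obster} directly, proving via Hahn--Banach duality that the bound $|\langle e^{T\Delta}(v_0-u_0),\varphi_0\rangle|\le C\sup_j e^{-D/(2\rho_j)}\|1_E e^{(T-t_j)\Delta}\varphi_0\|_{L^2(E)}$ places the target in the range of the adjoint of $\varphi_0\mapsto(1_E e^{(T-t_j)\Delta}\varphi_0)_j$ acting on the weighted $\ell^1$ space, which is exactly what produces controls summable against $e^{+D/\rho_j}$ (this is where the sup, rather than the sum, of \eqref{obster} is genuinely needed); or (ii) keep your quadratic functional but with weights $e^{-3D/\rho_n}$ (legitimate, since \eqref{obster} holds for every $D$), so that $f_j=e^{-3D/\rho_j}1_E e^{(T-t_j)\Delta}\hat\varphi_0$ and Cauchy--Schwarz gives $\sum_j e^{D/\rho_j}\|f_j\|_{L^2(E)}\le\bigl(\sum_j e^{-3D/\rho_j}\|e^{(T-t_j)\Delta}\hat\varphi_0\|_{L^2(E)}^2\bigr)^{1/2}\bigl(\sum_j e^{-D/\rho_j}\bigr)^{1/2}$, the last factor being finite because $\sum_j\rho_j\le T$ implies $\#\{j:\ \rho_j\in(2^{-k-1},2^{-k}]\}\le T\,2^{k+1}$, hence $\sum_j e^{-D/\rho_j}\le\sum_k T\,2^{k+1}e^{-D2^k}+C<\infty$. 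With either repair, the rest of your argument goes through and matches the paper's intended proof.
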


\begin{rem} In ~\cite[Theorem 4]{BuMo19} the observation involves the $L^1$ norm rather than the $L^2$ norm and consequently, the controls  $f$,$f_j$ are $L^\infty$. Here, for simplicity we kept $L^2$ in both cases. The proof of Theorem~\ref{control-hausbis} is obtained from the proof of~\cite[Theorem 4]{BuMo19} modulo this simple modification.
\end{rem}
\begin{rem}
We only stated three typical consequences. We could have mixed these examples (control on sets of positive Hausdorff content  {\em and} countable times).
\end{rem}
\begin{rem} In~\cite{BuMo19}, the spectral estimates are obtained in manifolds with boundaries under Dirichlet or Neumann boundary conditions. It would also be possible to include such boundaries in a non compact setting provided we still control the geometry at infinity. For example, we could include holes as long as their diameters remain in a compact set of $(0, +\infty)$ and they remain disjoint  from each other (with a fixed lower bound on their distances).
\end{rem} 
\section{proof of Theorem~\ref{princ}}

We start with the first part in Theorem~\ref{princ}. 
Let us define
\begin{equation}\label{prolong}
 v_\mu(t,x) =  \int_{\lambda=0}^\mu \frac{ \sinh(\lambda t)} {\lambda}  dm_\lambda u.
\end{equation}
Recall that 
$$ {- \Delta} v_\mu= \int_{\lambda=0}^\mu \frac{ \sinh(\lambda t)} {\lambda}  \lambda^2 dm_\lambda u.
$$
As a consequence, we have  
$$ \Bigl( \frac 1 {\kappa} \text{div} g^{-1} \kappa \nabla_x + \partial_t^2\Bigr) v_{\mu} =0 \Rightarrow   \Bigl(\text{div} g^{-1} \kappa \nabla_x + \partial_t\kappa \partial_t \Bigr) v_{\mu}=0. $$ 
Without loss of generality we can assume that in~\eqref{mesure} and~\eqref{mesurebis}, the constant $R$ is chosen large enough so that 
$$ \cup_{p \in \mathbb{Z}^d} B(p, R) = \mathbb{R}^d.$$ Let $1= \sum_p \chi(x-p)$ be a partition of unity associated. 
 Consider, for $T_2 > T_1 > 0$ the sets 
\begin{equation*}
\mathcal{K}_p  := [-T_1, T_1] \times \overline{B(p, R)}, \quad \Omega_p  := (-T_2, T_2) \times B(p, 2R),   \quad F_p = \omega \cap B(p,R), \quad  E_p= \{0\} \times F_p
\end{equation*} which by construction satisfy the inclusions $E_p \subset \mathcal{K}_p\subset \Omega_p$. 
Now assumption~\eqref{mesure} ensures that $|F_p|\geq \delta $. Remark that in~\eqref{hausdorf}, since $F_p$ has a diameter at most $2R$, when computing the Hausdorff content of $F_p$, we can assume that $\forall j, r_j\leq 4R$. As a consequence,  
$$\mathcal{C}^{d-1 - \epsilon} (E_p)\geq (4R)^{- \epsilon}\mathcal{C}^{d-1} (E_p) \geq c \delta.$$
For sufficiently small $\epsilon >0$ we can now apply~\cite[Theorem 5.1]{LoMa17} and get
\begin{equation}\label{log-mal} 
\sup_{\mathcal{K}_p} |\nabla_{t,x} v_\mu| \leq C \bigl( \sup_{E_p} |\nabla_{t,x} v_\mu| \bigr)^{\alpha} \bigl( \sup_{\Omega_p} |\nabla_{t,x} v_\mu|\bigr)^{1- \alpha},
\end{equation}
where the constant is uniform with respect to $p$  because the metric and $\kappa$ are bounded on $B(p, 2R)$ (uniformly with respect to $p$  while the metric is uniformly elliptic). 

We first replace $L^\infty$ norms by $L^2$ norms. We can assume $v_\mu\not \equiv 0$
Let 
\begin{align*}
a &= \left(  \epsilon \frac{\sup_{\mathcal{K}_p} |\nabla_{t,x} v_{\mu}|}{ \bigl( \sup_{\Omega_p} |\nabla_{t,x} v_{\mu}|\bigr)^{1- \alpha}} \right)^{\frac 1 \alpha}, \\
E_p' &= \{ x \in E_p; | v_\mu(x) | \leq a\}.
\end{align*} Then for $\epsilon C <1$, $|E'_p| \leq \delta /2$. Indeed, otherwise~\eqref{log-mal} would hold with $E_p$ replaced by $E'_p$ (and the same constants), leading to 
\begin{equation}
\sup_{\mathcal{K}_p} |\nabla_{t,x} v_\mu| \leq C \bigl( a\bigr)^{\alpha} \bigl( \sup_{\Omega_p} |\nabla_{t,x} v_\mu|\bigr)^{1- \alpha}
\Rightarrow \sup_{\mathcal{K}_p} |\nabla_{t,x} v_\mu| \leq C \epsilon {\sup_{\mathcal{K}_p} |\nabla_{t,x} v_\mu|},
\end{equation}
which, if $C\epsilon<1$ implies $v_{\mu} \mid_{\mathcal{K}_p}\equiv 0$ hence $v_\mu$ is identically $0$. 

As a consequence, 
$$ \int_{E_p} |v_\mu|^2 dx \geq \int_{E \setminus E'_p} |v_\mu|^2 dx \geq a^2 \frac \delta 2  \geq \frac \delta 2  \Bigl(  \epsilon \frac{\sup_{\mathcal{K}_p} |\nabla_{t,x} v_{\mu}|}{ \bigl( \sup_{\Omega_p} |\nabla_{t,x} v_{\mu} |\bigr)^{1- \alpha}} \Bigr) ^{\frac 2 \alpha}
$$ 
and we deduce

\begin{equation}\label{eq1.5}
\| u\|_{L^2(B(p,R)) } \leq C \bigl(\sup_{\Omega_p} |\nabla_{t,x} v_{\mu} |\bigr)^{1- \alpha} \| u \|_{L^2(E \cap B(p,R))}^{\alpha}.
\end{equation}
We now need  a variant of  Sobolev embeddings, which we prove for the reader's convenience: 
\begin{lem}\label{sobolev}
There exist $C, K>0$ such that for all $p\in \mathbb{Z}^d$,
and all $u\in L^2( \mathbb{R}^d;\kappa(x) dx )$,

$$ \Bigl(\sum_{p\in \mathbb{Z}^d} \| \nabla_{t,x} v_\mu \|^2_{L^\infty\bigl(( - T_2, T_2) \times B(p, 2R)\bigr)}\Bigr)^{1/2} \leq C e^{K\mu} \| u \|_{L^2( \mathbb{R}^d; \kappa(x) dx)}.$$
\end{lem}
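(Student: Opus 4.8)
The plan is to bound the $L^\infty$ norm on each slab $(-T_2,T_2)\times B(p,2R)$ by a local elliptic estimate, then sum in $p$ using the orthogonality built into the spectral decomposition of $v_\mu$.

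First I would recall that $v_\mu$ solves the uniformly elliptic degenerate equation $(\operatorname{div} g^{-1}\kappa\nabla_x + \partial_t\kappa\partial_t)v_\mu=0$ on $(-T_2,T_2)\times\mathbb{R}^d$, with coefficients bounded uniformly in $p$. Interior elliptic regularity (Caccioppoli plus Sobolev embedding, or De Giorgi--Nash--Moser, iterated enough times to reach $L^\infty$ control of one derivative) gives a constant $C$, independent of $p$, such that
$$\|\nabla_{t,x}v_\mu\|_{L^\infty((-T_2,T_2)\times B(p,2R))}\le C\|v_\mu\|_{L^2((-T_3,T_3)\times B(p,3R))}$$
for some $T_3>T_2$ and a slightly larger ball; here one uses that $v_\mu$, being a finite spectral integral against $\sinh(\lambda t)/\lambda$ with $\lambda\le\mu$, is smooth in $t$ with all $t$-derivatives controlled, so the degeneracy in the $t$-variable at $t=0$ is harmless. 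Squaring and summing over $p\in\mathbb{Z}^d$, and noting that the dilated balls $B(p,3R)$ have bounded overlap (a fixed number $N_d$ depending only on $d$), yields
$$\sum_{p}\|\nabla_{t,x}v_\mu\|_{L^\infty((-T_2,T_2)\times B(p,2R))}^2\le C^2 N_d\int_{-T_3}^{T_3}\int_{\mathbb{R}^d}|v_\mu(t,x)|^2\kappa(x)\,dx\,dt.$$

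It then remains to estimate $\|v_\mu(t,\cdot)\|_{L^2(\kappa dx)}$ for $|t|\le T_3$. By the spectral theorem and the definition \eqref{prolong},
$$\|v_\mu(t,\cdot)\|_{L^2(\kappa dx)}^2=\int_0^\mu\frac{\sinh^2(\lambda t)}{\lambda^2}(dm_\lambda u,u)\le \sup_{0\le\lambda\le\mu}\frac{\sinh^2(\lambda t)}{\lambda^2}\,\|u\|_{L^2(\kappa dx)}^2\le t^2 e^{2\mu|t|}\|u\|_{L^2(\kappa dx)}^2,$$
using $\sinh(\lambda t)/\lambda\le |t|e^{\lambda|t|}$ for $\lambda>0$. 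Integrating over $|t|\le T_3$ produces a factor $C e^{2K\mu}$ with $K=T_3$, and combining with the previous display gives the claimed bound with that choice of $K$ and a suitably enlarged $C$.

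The main obstacle is justifying the interior estimate with a constant uniform in $p$ and dealing cleanly with the degenerate $t$-direction: strictly speaking the equation is elliptic in $x$ but only has a $\partial_t\kappa\partial_t$ term in $t$, so one should not quote a standard uniformly elliptic interior estimate blindly. The cleanest route is to observe that $v_\mu$ is by construction real-analytic (indeed a finite superposition of $e^{\pm\lambda t}$ modes), so one can first extract pointwise control of $\partial_t^k v_\mu$ in $L^2_x$ directly from the spectral representation — each such derivative just inserts a bounded factor $\lambda^k\le\mu^k$ times $\sinh$ or $\cosh$ — thereby reducing matters to a genuinely uniformly elliptic estimate in the $x$-variables alone, parametrised by $t$, for which the $p$-uniform constant follows from \eqref{ellipticity}. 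One must also check that enlarging $R$ so that $\bigcup_p B(p,R)=\mathbb{R}^d$ (done earlier in the proof) is compatible with keeping the overlap of $B(p,3R)$ bounded, which it is, since these are balls of fixed radius centred at the lattice $\mathbb{Z}^d$.
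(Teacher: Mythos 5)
Your overall strategy --- a local elliptic estimate bounding $\|\nabla_{t,x}v_\mu\|_{L^\infty}$ on a small slab by an $L^2$ norm of $v_\mu$ on a larger one with a translation-invariant constant, summation over $p$ via bounded overlap, then a spectral bound on $\|v_\mu\|_{L^2}$ --- is the same as the paper's Lipschitz-case proof, and your spectral computation $\|v_\mu(t,\cdot)\|_{L^2(\kappa dx)}\le |t|e^{\mu|t|}\|u\|_{L^2(\kappa dx)}$ is correct. Two points need fixing.

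First, the ``degeneracy'' you worry about does not exist. Since $\kappa\ge a>0$ and $g^{-1}$ is bounded above and below by~\eqref{ellipticity}, the operator $\operatorname{div}_x(g^{-1}\kappa\nabla_x)+\partial_t\kappa\partial_t$ is uniformly elliptic in divergence form on $\mathbb{R}^{d+1}_{t,x}$, with coefficient matrix $\operatorname{diag}(g^{-1}\kappa,\kappa)$ bounded and bounded below. There is nothing special about the $t$-direction, and the detour through spectral control of $\partial_t^k v_\mu$ is unnecessary; it also does not remove the real difficulty, since you would still need a gradient $L^\infty$ estimate in the $x$-variables for a Lipschitz-coefficient operator.

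Second, and this is the substantive gap, the tools you invoke do not deliver the key local estimate at Lipschitz regularity. De Giorgi--Nash--Moser gives interior H\"older continuity of the \emph{solution}, not of its gradient; and iterating Caccioppoli/Sobolev to reach $\nabla w\in L^\infty$ requires differentiating the equation repeatedly, hence $W^{k,\infty}$ coefficients for $k$ growing with $d$, which Lipschitz does not provide. What is actually needed is an interior gradient H\"older estimate for divergence-form operators with $C^{0,\alpha}$ coefficients (Lipschitz implies $C^{0,\alpha}$ for every $\alpha<1$): this is precisely what the paper cites from Han--Lin~\cite{HaLi97}, namely $\|\nabla_y w\|_{C^{0,\alpha}(B_{1/2})}\le M\,\|w\|_{H^1(B_1)}$ for weak solutions with $f=0$. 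Your stated inequality, with $L^2(B(p,3R))$ rather than $H^1$ on the right, then follows by composing this with a Caccioppoli inequality on an intermediate ball; this is a legitimate small variant of the paper's route, which instead keeps $H^1$ on the right, sums using quasi-orthogonality of the $H^1$ norm, and separately bounds $\|\nabla_x v_\mu\|_{L^2}$ through $(-\Delta v_\mu,v_\mu)$ and $\|\partial_t v_\mu\|_{L^2}$ spectrally. Either version works once the Schauder-type estimate is correctly identified as the input.
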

{
\begin{rem} 
For smooth metrics and compact manifolds (without boundary), the spaces $\mathcal{H}^\sigma = D(-\Delta)^{\frac{\sigma}{2}}$ coincide with the usual Sobolev spaces $H^\sigma$, and Lemma~\ref{sobolev} is just the usual Sobolev injection (see below). At our level of regularity this is no longer the case.
\end{rem}}
Let us first, assuming Lemma~\ref{sobolev}, finish the proof of (the first part of) Theorem~\ref{princ}. Observe that Young's inequality yields 
$$a^{2- 2\alpha} b^{2\alpha} \leq C(a^2 + b^2),$$
Hence, fom~\eqref{eq1.5} we deduce for all $D>0$,
\begin{equation}\label{eq1.6}
\| u\|^2_{L^2(B(p,R)) } \leq C e^{-D\mu} \sup_{\Omega_p} |\nabla_{t,x} v_{\mu}|^2  + C e^{\frac{ 2- 2\alpha } {2\alpha} D \mu}\| u \|^2_{L^2(E \cap B(p,R))}.
\end{equation} Now, from Lemma~\ref{sobolev} we deduce
\begin{multline}\label{eq1.7}
\| u\|^2_{L^2(\mathbb{R}^d) }  \leq C e^{-D\mu} \sum_{p \in \mathbb{Z}}\sup_{\Omega_p} |\nabla_{t,x} v_{\mu}|^2  + C e^{D\frac{ 2- 2\alpha } {2\alpha} \mu}\sum_{p\in \mathbb{Z}}\| u \|^2_{L^2(E \cap B(p,R))}\\
\leq C e^{-D\mu}  e^{2K\mu} \| u \|^2_{L^2( \mathbb{R}^d)} + C e^{D\frac{ 2- 2\alpha } {2\alpha} \lambda}\| u \|^2_{L^2(E )},
\end{multline}
and the proof of Theorem~\ref{princ} for $\mu \geq  \mu_0$ follows from taking $D>2K$. \par 

Let us now prove Theorem~\ref{princ} in the second case, f $\mu\geq \mu_0$. 
From assumption~\ref{mesurebis}, we have 
 $$\mathcal{C}^{n} (E_p) \geq \delta, $$ and we deduce from ~\cite[Theorem 5.1]{LoMa17} that if $n$ is sufficiently close to $d$ that 

\begin{equation}\label{log-malbis} 
\sup_{\mathcal{K}_p} |\nabla_{t,x} v_\mu| \leq C \bigl( \sup_{E_p} |\nabla_{t,x} v_\mu| \bigr)^{\alpha} \bigl( \sup_{\Omega_p} |\nabla_{t,x} v_\mu|\bigr)^{1- \alpha},
\end{equation}
where the constant is uniform with respect to $p$  because the metric and $\kappa$ are bounded on $B(p, 2R)$ (uniformly with respect to $p$  while the metric is uniformly elliptic). We deduce
$$ \| u \| _{L^2( B(p, R)} \leq C\sup_{\mathcal{K}_p} |\nabla_{t,x} v_\mu| \leq C \bigl( \sup_{E_p} |\nabla_{t,x} v_\mu| \bigr)^{\alpha} \bigl( \sup_{\Omega_p} |\nabla_{t,x} v_\mu|\bigr)^{1- \alpha},$$
we deduce for all $D>0$,
\begin{equation}\label{eq1.6bis}
\| u\|^2_{L^2(B(p,R)) } \leq C e^{-D\mu} \sup_{\Omega_p} |\nabla_{t,x} v|^2  + C e^{\frac{ 2- 2\alpha } {2\alpha} D \mu}\| u \|^2_{L^\infty(E \cap B(p,R))}.
\end{equation}
Now, from Lemma~\ref{sobolev} we deduce
\begin{multline}\label{eq1.7bis}
\| u\|^2_{L^2(\mathbb{R}^d) }  \leq C e^{-D\mu} \sum_{p \in \mathbb{Z}}\sup_{\Omega_p} |\nabla_{t,x} v|^2  + C e^{D\frac{ 2- 2\alpha } {2\alpha} \mu}\sum_{p\in \mathbb{Z}}\| u \|^2_{L^\infty((E \cap B(p,R))\mathbb{R}^d))}\\
\leq C e^{-D\mu}  e^{2K\mu} \| u \|^2_{L^2( \mathbb{R}^d)} + C e^{D\frac{ 2- 2\alpha } {2\alpha} \lambda}\sum_{p\in \mathbb{Z}^d} \| u \|^2_{L^\infty(E \cap B(p,R))},
\end{multline}
and the proof of the second part in Theorem~\ref{princ} for $\mu \geq  \mu_0$ follows from taking $D>2K$. Finally, for $\mu< \mu_0$, apply Theorem~\ref{princ} for $\mu = \mu_0$ to 
$$ u = \Pi_\mu(u).$$
\begin{proof}[Proof of Lemma~\ref{sobolev} for smooth metrics] We start with an elementary proof (which works only for smooth metrics) relying only on Sobolev embeddings. Later on we give the general proof for Lipschitz metrics. Let $\chi \in C^\infty (\mathbb{R}^{d+1})$ non negative be equal to $1$ on $(- T_2, T_2) \times B(0, R)$. Let $s> \frac {d+1} 2$, and 
$$ \chi_p (x) = \chi (x-p). 
$$  By Sobolev embeddings, for all $p \in \mathbb{Z}^d$,
$$ \| \nabla_{t,x} v_\mu \|_{L^\infty(( - T_2, T_2) \times B(p, 2R))}\leq C \|  (\chi_p v_\mu)\| _{H^{s+1}},$$
and we deduce 
\begin{multline}
 \Bigl(\sum_{p\in \mathbb{Z}^d} \| \nabla_{t,x} v_\mu \|^2_{L^\infty( - T_2, T_2) \times B(p, 2R)}\Bigr)^{1/2} \leq C  \| v_\mu \|_{H^{s+1} (( -T_3, T_3) \times \mathbb{R}^d)}\\
  \leq \sum_{n=0}^{s+1}\| \partial_t ^n v_\mu \|_{L^2(( -T_3, T_3); H^{s+1-n}( \mathbb{R}^d))}. 
  \end{multline} We now use the elliptic regularity result (here we have to assume that $\kappa, g \in C^{k}, k \geq s$), 
$$\| w \| _{H^{\sigma}( \mathbb{R}^d)} \sim \|( ( - \Delta)^{\sigma/2} + 1 )w \| _{L^2( \mathbb{R}^d)}
$$ 
and we get (recal that $- \Delta$ acts on the spectral measure as the multiplication by $\lambda^2$):
\begin{multline}
 \Bigl(\sum_{p\in \mathbb{Z}^d} \| \nabla_{t,x} v_\mu \|^2_{L^\infty( - T_2, T_2) \times B(p, 2R)}\Bigr)^{1/2}   \leq C\sum_{n=0}^{s+1}\| \partial_t ^n v_\mu \|_{L^2(( -T_3, T_3); H^{s-n}( \mathbb{R}^d, \kappa(x) dx))} \\
 \leq C \sum_{n=0}^s\| \int_{{\lambda=0}} ^\mu \frac{ \frac{ d^n} {dt^n} (\sinh) ( \lambda t)}{ \lambda} ( \lambda ^ {s+1-n} + 1) d\mu_\lambda u \|_{L^2(( -T_3, T_3); L^2(\mathbb{R}^d, \kappa (x) dx))} 
 \leq C e^{C\mu} \| u \|_{L^2( \mathbb{R}^d)}
 \end{multline}
 where in the last estimate we used~\eqref{bound}. 
%We have from Moser iteration a control on  the $L^\infty$ norm of Laplace eigenfunctions $e_k$ with eigenvalues $\lambda_k$
%$$ \exists \sigma_1, C>0; \quad \forall k,  \quad \| e_k \|_{L^\infty}\leq C (1+ \lambda_k ) ^{\sigma_1}  \| e_k\|_{L^2}.$$ 
% Also from Weyl formula 
%$$\lambda_k \sim k^{\frac 1 d}. $$
%As a consequence, we have 
%\begin{multline}
%\| \sum_k u_k  e_k \|_{L^\infty} \leq C \sum_k |u_k| (1+\lambda_k)^{\sigma_1} \\
%\leq C \Bigl( \sum_k |u_k |^2 (1+ \lambda_k )^{2\sigma_1+2p}\Bigr)^{1/2} \Bigl( \sum_k (1+ \lambda_k)^ {-2p} \Bigr)^{1/2}\leq C \| u\|_{\mathcal{H}^{\sigma_1+ p}},
%\end{multline} as soon as $2p>d$.
\end{proof}

\begin{proof}[Proof of Lemma~\ref{sobolev} for Lipschitz metrics] Let us now come back to the case of Lipschitz metrics.
 We shall use the following results from~\cite{HaLi97} about weak solutions to 
 \begin{equation}\label{weak}
  -\sum_{i,j} \partial_{y_i} a_{i,j} \partial_{y_j} w =f, \qquad  \textrm{in } B(0,1),
  \end{equation}  with $B(0,1) \subset \mathbb{R}^n$ and 
  $$ \lambda |\xi|^2 \leq \sum_{i,j =1}^n a_{i,j}(x) \xi_i \xi_j \leq \Lambda |\xi|^2, \qquad x \in B(0,1), \quad \xi \in \mathbb{R}^n. $$
   
 \begin{thm}[\protect{\cite[Theorem 3.13, combined with Theorem 3.1]{HaLi97}}]Consider $w \in H^1( B(0,1))$ a weak solution to~\eqref{weak}. Assume that $a_{i,j}\in C^{0,\alpha}( \overline{B(0,1)})$,  $f \in L^q(B(0,1))$, for some $q >n$ and $\alpha = 1 - \frac n q \in (0,1)$. Then $\nabla_y w \in C^{0,\alpha}(B(0,1))$. Moreover there exists $M = M( n, \lambda, \| a_{i,j} \|_{C^{0,\alpha}})>0$ such that we have the estimate
  \begin{multline}
 \|\nabla_yw\|_{C^{0,\alpha}(B(0, \frac 1 2))} = \sup_{B(0, \frac 1 2)} |\nabla_y w| +  \sup_{\smash{x,y \in B(0, \frac 1 2), \, x \not = y}} \frac{|\nabla_y w(y) - \nabla_y w(y')|}{ |y - y'|^{\alpha} }   \\
   \leq M (\| f\|_{L^q(B(0,1))}  +  \|w \|_{H^1(B(0,1))}).
 \label{eq:gradient estimate Holder}
 \end{multline}  
   \end{thm}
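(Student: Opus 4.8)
The plan is to recall the classical perturbation (``freezing-the-coefficients'') proof of interior $C^{1,\alpha}$ regularity, which combines the De Giorgi--Nash--Moser theory for $w$ itself (this is the rôle of Theorem~3.1 in the quoted reference) with a Campanato-type iteration for $\nabla w$. First I would record the preliminary bounds: Caccioppoli's inequality gives $\|\nabla w\|_{L^2(B(0,3/4))}\le C(\|w\|_{L^2(B(0,1))}+\|f\|_{L^q(B(0,1))})$, and the De Giorgi--Nash--Moser estimate gives $w\in C^{0,\gamma}_{\mathrm{loc}}(B(0,1))$ for some $\gamma=\gamma(n,\lambda,\Lambda)\in(0,1)$ together with $\|w\|_{C^{0,\gamma}(B(0,3/4))}\le C(\|w\|_{H^1(B(0,1))}+\|f\|_{L^q(B(0,1))})$; since $\Lambda\le\|a_{i,j}\|_{C^0}\le\|a_{i,j}\|_{C^{0,\alpha}}$, all the constants here already have the advertised dependence on $n$, $\lambda$ and $\|a_{i,j}\|_{C^{0,\alpha}}$. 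Abbreviate $N:=\|w\|_{H^1(B(0,1))}+\|f\|_{L^q(B(0,1))}$.

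Next, for a fixed $x_0\in B(0,1/2)$ and a small radius $r$ with $B(x_0,r)\subset B(0,3/4)$, I would freeze the coefficients at $x_0$: let $v\in w+H^1_0(B(x_0,r))$ solve $-\sum_{i,j}\partial_{y_i}\bigl(a_{i,j}(x_0)\partial_{y_j}v\bigr)=0$. The difference $h=w-v\in H^1_0(B(x_0,r))$ then satisfies a divergence-form equation with right-hand side $f+\operatorname{div}\bigl((a(\cdot)-a(x_0))\nabla w\bigr)$, and testing against $h$ — using $\|a(\cdot)-a(x_0)\|_{L^\infty(B(x_0,r))}\le[a]_{C^{0,\alpha}}r^{\alpha}$, Sobolev--Poincar\'e in $H^1_0(B(x_0,r))$, and H\"older's inequality with the exponent relation $\alpha=1-n/q$, $q>n$ — yields the energy bound $\int_{B(x_0,r)}|\nabla h|^2\le C[a]_{C^{0,\alpha}}^2 r^{2\alpha}\int_{B(x_0,r)}|\nabla w|^2+Cr^{n+2\alpha}\|f\|_{L^q}^2$. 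On the other hand, a linear change of variables turns $v$ into a harmonic function, so each $\partial_k v$ is harmonic, and the interior gradient/mean-value estimates for harmonic functions give the two classical decay inequalities $\int_{B(x_0,\rho)}|\nabla v|^2\le C(\rho/r)^{n}\int_{B(x_0,r)}|\nabla v|^2$ and $\int_{B(x_0,\rho)}|\nabla v-(\nabla v)_{x_0,\rho}|^2\le C(\rho/r)^{n+2}\int_{B(x_0,r)}|\nabla v-(\nabla v)_{x_0,r}|^2$ for $0<\rho\le r$, where $(\cdot)_{x_0,\rho}$ is the average over $B(x_0,\rho)$.

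Combining these (writing $\nabla w=\nabla v+\nabla h$ and using that the average minimises the $L^2$ deviation), I would derive, with $\phi(r):=\int_{B(x_0,r)}|\nabla w|^2$ and $\psi(r):=\int_{B(x_0,r)}|\nabla w-(\nabla w)_{x_0,r}|^2$, the coupled iteration inequalities
\[ \phi(\rho)\le C\Bigl[(\rho/r)^{n}+[a]_{C^{0,\alpha}}^2 r^{2\alpha}\Bigr]\phi(r)+Cr^{n+2\alpha}\|f\|_{L^q}^2,\qquad \psi(\rho)\le C(\rho/r)^{n+2}\psi(r)+C[a]_{C^{0,\alpha}}^2 r^{2\alpha}\phi(r)+Cr^{n+2\alpha}\|f\|_{L^q}^2, \]
valid for $0<\rho\le r\le r_0$. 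Choosing $r_0=r_0(n,\lambda,\|a_{i,j}\|_{C^{0,\alpha}})$ small enough that $C[a]_{C^{0,\alpha}}^2 r_0^{2\alpha}$ is below the threshold of the elementary iteration lemma, the first inequality gives $\phi(r)\le C r^{n-\varepsilon}N^2$ for every $\varepsilon>0$; inserting this into the second and iterating once more gives $\psi(r)\le C r^{n+2\beta_1}N^2$ for some $\beta_1>0$, hence $\nabla w\in C^{0,\beta_1}_{\mathrm{loc}}$ by Campanato's characterisation, in particular $\nabla w\in L^\infty_{\mathrm{loc}}$. With boundedness in hand one has $\phi(r)\le Cr^{n}\|\nabla w\|_{L^\infty}^2$, and re-running the $\psi$-iteration with this improved input yields $\psi(r)\le C r^{n+2\alpha}N^2$ uniformly for $x_0\in B(0,1/2)$; Campanato's embedding $\mathcal{L}^{2,\,n+2\alpha}=C^{0,\alpha}$ then gives precisely $\|\nabla w\|_{C^{0,\alpha}(B(0,1/2))}\le M N$ with $M=M(n,\lambda,\|a_{i,j}\|_{C^{0,\alpha}})$, which is the claimed estimate.

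The main obstacle is the bookkeeping in this last step: one must arrange that the smallness needed to absorb the H\"older perturbation of the coefficients is obtained merely by shrinking the radius $r$ (rather than by assuming $[a]_{C^{0,\alpha}}$ itself small), and one must accept that a single pass through the Campanato iteration only produces some intermediate H\"older exponent for $\nabla w$ — the sharp exponent $\alpha$ is reached only after using the resulting local boundedness of $\nabla w$ to upgrade $\phi(r)=O(r^{n})$ and iterating a final time. Everything else (Caccioppoli's inequality, De Giorgi--Nash--Moser, interior estimates for harmonic functions, the elementary iteration lemma, and Campanato's characterisation of H\"older spaces) is classical, and I would simply quote it from~\cite{HaLi97}.
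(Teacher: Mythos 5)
This statement is not proved in the paper at all: it is quoted verbatim from the reference \cite{HaLi97} (Theorem 3.13 combined with Theorem 3.1), so the ``paper's proof'' is simply that citation. Your outline --- Caccioppoli and De Giorgi--Nash--Moser as preliminaries, freezing the coefficients, comparison with the constant-coefficient solution, the coupled Campanato iteration for $\phi$ and $\psi$ with smallness obtained by shrinking the radius, and the final bootstrap through an intermediate exponent to reach the sharp $C^{0,\alpha}$ bound with constant depending only on $n$, $\lambda$, $\|a_{i,j}\|_{C^{0,\alpha}}$ --- is correct and is essentially the textbook proof given in that reference.
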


 Now, we have on $\mathbb{R}_t \times \mathbb{R}^{d}_x = \mathbb{R}^{n}_y$, 
 \begin{gather}
  (- T_2, T_2) \times B(0,2R) \subset B \left(  (0,0), \sqrt{4R^2+T_2^2}   \right), \\
   B \left( (0,0), 2 \sqrt{ 4R^2+ T_2^2}  \right) \subset \left( -2\sqrt{ 4R^2+ T_2^2}, 2 \sqrt{ 4R^2+ T_2^2} \right) \times B \left( 0, 2\sqrt{ 4R^2+ T_2^2}  \right),
   \end{gather}
 and considering the function 
 $$ \widetilde{w} (t,x) =  v_{\mu} \left(2t\sqrt{4R^2+T_2^2} , 2x\sqrt{4R^2+T_2^2} \right), \qquad (t,x) \in B((0,0), 1)$$ which satisfies 
 $$ \sum_{p, q=1}^d \partial_{x_p}a_{p,q} (t,x) \partial_{x_q} \widetilde{w}  + \partial_t b(x) \partial _t \widetilde{w} =0,
 $$
 $$ a_{p,q}(x) = g^{-1} (\tau x) \kappa (\tau x ), \qquad b(x) = \kappa (\tau x), \qquad \tau = 2\sqrt{4R^2+T_2^2}
 $$
  we get from~\eqref{eq:gradient estimate Holder} (with $f=0$):
 \begin{align*}
 \sup_{(- T_2, T_2) \times B(0, 2R)}  |\nabla_{t,x} v_{\mu}|  & \leq \|\nabla_{t,x} \tilde{w} \|_{C^{0,\alpha}(B(0, \frac 1 2))} \\ 
  & \leq  M  \|  \tilde{w} \|_{H^1(B(0,1))} \\
  & \leq M \| v_{\mu} \|_{H^1 \left(( -2\tau, 2 \tau ) \times B(0, 2\tau) \right)},
   \end{align*} where the constant $M$ depends only on the constants $A,a$ in assumption~\eqref{ellipticity},  the dimension of space, $d$ , $T_2$ and $R$.
 Since the bounds in~\eqref{ellipticity} are translation invariant, we get for all $p \in \mathbb{Z}^d$, {\em with the same constant~$M$}, 
 \begin{equation*}
 \sup_{(- T_2, T_2) \times B(p, 2R)}  |\nabla_{t,x} v_{\mu}| 
   \leq M \| v_{\mu} \|_{H^1( \left( -2\tau, 2 \tau ) \times B(p, 2\tau \right)}.
   \end{equation*}
  As a consequence, summing with respect to $p$, using quasi-orthogonality for the $H^1$ norm, we get
  \begin{equation*}
 \sum_{p \in \mathbb{Z}^d} \sup_{(- T_2, T_2) \times B(p, 2R)}  |\nabla_{t,x} v_{\mu}|^2
   \leq C \|  v_{\mu} \|^2_{H^1 \left( ( -\tau, \tau ) \times \mathbb{R}^d \right)}.
   \end{equation*} On the other hand, using (\ref{bound}) we have 
   \begin{align*}
    \| \partial_tv_\mu\| ^2_{L^2( -\tau,\tau ) \times \mathbb{R}^d;  \kappa dx dt)} & = \left\| \left\| \int_0 ^\mu \cosh  ( \lambda t) d\mu_{\lambda} u \right\|_{L^2( \kappa dx)}^2    \right\|^2_{L^2_t}  \\
   &  \leq \left\|  \left(  \max_{0\leq \lambda \leq \mu }  \cosh  ( \lambda t)   \right)^2   \left\|  u \right\|_{L^2( \kappa dx)}^2    \right\|^2_{L^2_t}  \\
    &  \leq \int_{-\tau}^\tau \cosh ^2 ( \mu t) d t \| u\|_{L^2_x}^2  \leq C e^{2T \mu} \| u\|_{L^2(\kappa dx)}^2.
   \end{align*}
   Next, using that the metric $g$ is uniformly elliptic according to (\ref{ellipticity}), for any $t\in (-\tau,\tau)$, we have  
   \begin{align*}
    \| \nabla_x v_{\mu}(t) \|^2_{L^2(\kappa(x) dx)} & = \int_{\R^d} |\nabla_x v_{\mu}|^2 (t,x)  \kappa(x) dx \\
    & \leq C(a,A) \int_{\R^d} \sum_{i,j} g^{-1}_{i,j} (x)  \partial_{x_i} v_{\mu}(t,x) \partial_{x_j} v_{\mu}(t,x) \kappa (x) dx \\
    & = C(a,A) \bigl( - \Delta v_{\mu}(t), v_{\mu}(t) \bigr)_{L^2 ( \kappa(x) dx)}.
   \end{align*} Using again (\ref{bound}) we deduce
\begin{multline*}
    \| \nabla_x v_\mu\| ^2_{L^2((-\tau,\tau) \times \mathbb{R}^d); \kappa dx dt} \leq C \left\|  \bigl( -\Delta v_\mu, v_{\mu} \bigr)_{L^2(\kappa (x) dx)} \right\| ^2_{L^2_t} \\
  = \int_{-\tau}^\tau   \left(  \max_{0\leq \lambda \leq \mu }   \lambda \sinh  ( \lambda t)   \right)^2  dt  \, \, \| u\|_{L^2(\kappa(x) \ud x)}^2 \\
  \leq C e^{2\tau \mu} \| u\|_{L^2(\kappa(x) \ud x)}^2.
   \end{multline*} This concludes the proof of Proof of Lemma~\ref{sobolev} for Lipschitz metrics.
\end{proof}

\end{document}